\def\empt{\varepsilon}
\def\NN{\mathbb{N}}
\def\bbf{\boldsymbol{f}}
\def\bw{\boldsymbol{w}}
\def\cP{\mathcal{P}}
\def\cC{\mathcal{C}}
\def\cP{\mathcal{P}}
\theoremstyle{plain}
\newtheorem{theorem}{Theorem} 
\newtheorem{lemma}[theorem]{Lemma}
\newtheorem{proposition}[theorem]{Proposition}
\theoremstyle{definition}
\theoremstyle{remark}
\newtheorem{remark}[theorem]{Remark}
\newtheorem*{note}{Note}
\begin{document}

\title{A new characteristic property of rich words}

\author{Michelangelo Bucci}

\address{Dipartimento di Matematica e Applicazioni ``R. Caccioppoli", Universit\`a degli Studi di Napoli Federico II, Via Cintia, Monte S. Angelo, I-80126 Napoli, Italy}

\email{micbucci@unina.it}

\author{Alessandro De Luca}

\address{Dipartimento di Matematica e Applicazioni ``R. Caccioppoli", Universit\`a degli Studi di Napoli Federico II, Via Cintia, Monte S. Angelo, I-80126 Napoli, Italy}

\email{alessandro.deluca@unina.it}

\author{Amy Glen}

\address{LaCIM, Universit\'e du Qu\'ebec \`a Montr\'eal, C.P. 8888, succursale Centre-ville, Montr\'eal, Qu\'ebec, H3C 3P8, Canada}

\email{amy.glen@gmail.com}

\thanks{{\bf Corresponding author:} Amy Glen}


\author{Luca Q.~Zamboni}

\address{Universit\'e de Lyon, 
Universit\'e Lyon 1, 
CNRS UMR 5208 Institut Camille Jordan, 
B\^atiment du Doyen Jean Braconnier, 
43, blvd du 11 novembre 1918, 
F-69622 Villeurbanne Cedex, France}
\address{Reykjavik University, 
School of Computer Science, 
Kringlan 1, 103 Reykjavik, Iceland}

\email{luca.zamboni@wanadoo.fr}

\subjclass[2000]{68R15}

\keywords{combinatorics on words; palindromes; rich words; return words}

\date{May 28, 2008}

\begin{abstract}
Originally introduced and studied by the third and fourth authors together with J.~Justin and S.~Widmer (2008), {\em rich words} constitute a new class of finite and infinite words characterized by containing the maximal number of distinct palindromes. Several characterizations of rich words have already been established. A particularly nice characteristic property is that all `complete returns' to palindromes are palindromes. In this note, we prove that rich words are also characterized by the property that each factor is uniquely determined by its longest palindromic prefix and its longest palindromic suffix. 
\end{abstract}

\maketitle

\section{Introduction}

In \cite{xDjJgP01epis}, X.~Droubay, J.~Justin, and G.~Pirillo proved that any finite word $w$ of length $|w|$ contains at most $|w| + 1$ distinct palindromes (including the empty word). Inspired by this result, the third and fourth authors together with J.~Justin and S.~Widmer recently initiated a unified study of finite and infinite words that are characterized by containing the maximal number of distinct palindromes (see~\cite{aGjJsWlZ08pali}). Such words are called {\em rich words} in view of their `palindromic richness'. More precisely, a finite word $w$ is {\em rich} if and only if it has exactly $|w| + 1$ distinct palindromic factors. For example, $abac$ is rich, whereas $abca$ is not. An infinite word is {\em rich} if all of its factors are rich.

Rich words have appeared in many different contexts; they include episturmian words, complementation-symmetric sequences, symbolic codings of trajectories of symmetric interval exchange transformations, and a certain class of words associated with $\beta$-expansions where $\beta$ is a simple Parry number. Another special class of rich words consists of S.~Fischler's sequences with ``abundant palindromic prefixes'', which were introduced and studied in~\cite{sF06pali} in relation to Diophantine approximation.  
Some other simple examples of rich words include:  non-recurrent infinite words like $abbbb\cdots$ and $abaabaaabaaaab\cdots$; the periodic infinite words: $(aab^kaabab)(aab^kaabab)\cdots$, with $k \geq 0$; the non-ultimately periodic recurrent infinite word $\psi(\bbf)$ where $\bbf = abaababaaba\cdots$ is the {\em Fibonacci word} and $\psi$ is the morphism: $a \mapsto aab^kaabab$, $b \mapsto bab$; and the recurrent, but not uniformly recurrent, infinite word generated by the morphism: $a \mapsto aba$, $b\mapsto bb$. See \cite{aGjJsWlZ08pali} for further examples and references.

Let $u$ be a non-empty factor of a finite or infinite word $w$. We say that $u$ is {\em unioccurrent} in $w$ if $u$ has exactly one occurrence in~$w$. Otherwise, if $u$ has more than one occurrence in $w$, then there exists a factor $r$ of $w$ having exactly two distinct occurrences of $u$, one as a prefix and one as a suffix. Such a factor $r$ is called a {\em complete return} to $u$ in $w$. For example, $aabcbaa$ is a complete return to $aa$ in the rich word: $aabcbaaba$. In~\cite{aGjJsWlZ08pali}, it was shown that rich words are characterized by the property that all complete returns to palindromes are palindromes.

The following proposition collects together all of the characteristic properties of rich words that were previously established in \cite{xDjJgP01epis} and \cite{aGjJsWlZ08pali}.

\begin{proposition} \label{P:rich-aGjJ07pali}  
For any finite or infinite word $w$, the following conditions are equivalent:
\begin{itemize}
\item[i)] $w$ is rich;
\item[ii)] every factor $u$ of $w$ contains exactly $|u|+1$ distinct palindromes;
\item[iii)] for each factor $u$ of $w$, every prefix (resp.~suffix) of $u$ has a unioccurrent palindromic suffix (resp.~prefix); 
\item[iv)] every prefix of $w$ has a unioccurrent palindromic suffix; 
\item[v)] for each palindromic factor $p$ of $w$, every complete return to $p$ in $w$ is a palindrome.
\end{itemize}
\end{proposition}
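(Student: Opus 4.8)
The plan is to treat the proposition as a small network of implications resting on one counting lemma together with two elementary reduction principles, and then to handle the complete-return condition v) by a separate argument. The combinatorial engine is the observation of Droubay--Justin--Pirillo that appending a single letter to a word $u$ creates at most one new palindromic factor, namely the longest palindromic suffix $\mathrm{lps}(ua)$ of $ua$, and that this palindrome is genuinely new exactly when it is unioccurrent in $ua$. Starting from the empty word and iterating over prefixes, this shows that a finite word $u$ attains the maximal count $|u|+1$ if and only if every nonempty prefix of $u$ has a unioccurrent longest palindromic suffix; reading off the definitions, this is precisely i) $\Leftrightarrow$ iv) for finite words.

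First I would record two reductions. The reversal map $u\mapsto\rev{u}$ is a length-preserving involution on factors that carries palindromes to palindromes, so richness is reversal-invariant, and the assertion ``a prefix has a unioccurrent palindromic suffix'' becomes ``the corresponding suffix has a unioccurrent palindromic prefix''. Secondly, richness is hereditary: if $w$ satisfies iv) then so does every prefix $p$ of $w$, since each prefix of $p$ is a prefix of $w$; combining this with reversal (a suffix of $w$ is the reversal of a prefix of $\rev{w}$) shows that every factor of $w$, being a prefix of a suffix, is rich. With these two facts the conditions ii), iii), iv) collapse onto i): heredity upgrades i) to ``every factor is rich'', which is ii) as well as the prefix half of iii), while reversal supplies the suffix half; conversely the prefix half of iii) applied to the single factor $u=w$ already yields iv). The infinite case follows because every factor sits inside some prefix.

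The substantive part is the equivalence of v) with richness, and I would prove both directions through the interplay between longest palindromic suffixes and complete returns, using repeatedly that a palindromic suffix of a palindrome is also a prefix of it. For v) $\Rightarrow$ i): if some prefix $v$ violated iv), then $q:=\mathrm{lps}(v)$ would occur at least twice in $v$, and the complete return $r$ to $q$ that ends where $v$ ends would be a suffix of $v$ with $|r|>|q|$; condition v) forces $r$ to be a palindrome, producing a palindromic suffix of $v$ strictly longer than $q$ and contradicting maximality. For i) $\Rightarrow$ v): given a palindromic factor $p$ and a complete return $r$ to it, richness makes $q:=\mathrm{lps}(r)$ unioccurrent in $r$; since $q$ is a palindrome of length at least $|p|$ having $p$ as a suffix, it also has $p$ as a prefix, so the occurrence of $p$ at position $|r|-|q|$ must be one of the only two occurrences of $p$ in $r$. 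A short case analysis on $|q|$ then closes the argument: $q=p$ would make $p=\mathrm{lps}(r)$ occur twice, contradicting unioccurrence, while $|p|<|q|<|r|$ would place a third occurrence of $p$ inside $r$, contradicting that $r$ is a complete return; hence $|q|=|r|$ and $r$ is a palindrome.

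I expect the main obstacle to be the forward direction i) $\Rightarrow$ v): tracking the positions of the occurrences of $p$ and of $\mathrm{lps}(r)$ inside $r$ simultaneously, so that the case $q=p$ (resolved by unioccurrence) and the case $|q|>|p|$ (resolved by the complete-return hypothesis) are both closed without gaps, is where the reasoning is most delicate. By comparison, the counting lemma and the reversal/heredity reductions are routine once the longest-palindromic-suffix bookkeeping is in place.
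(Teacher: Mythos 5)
Your proposal is correct, but be aware that there is no in-paper proof to compare it against: Proposition~\ref{P:rich-aGjJ07pali} is quoted as known, with Remark~\ref{R:1} attributing the equivalences i) $\Leftrightarrow$ ii) $\Leftrightarrow$ iii) $\Leftrightarrow$ iv) to \cite{xDjJgP01epis} and the introduction attributing v) to \cite{aGjJsWlZ08pali}; the paper only \emph{uses} these facts (chiefly iv) and v)) to prove Theorems~\ref{T:main} and~\ref{T:main-2}. What you have written is essentially a faithful reconstruction of the arguments in those two references. The Droubay--Justin--Pirillo counting lemma (each appended letter creates at most one new palindrome, namely the longest palindromic suffix, which is new exactly when unioccurrent) correctly yields i) $\Leftrightarrow$ iv) for finite words, and your reversal and heredity reductions correctly collapse ii) and iii) onto i), including the lift to infinite words. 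Your two complete-return arguments are also sound: in v) $\Rightarrow$ iv), the complete return to the longest palindromic suffix of a violating prefix $v$, taken so as to end where $v$ ends, is forced by v) to be a palindromic suffix of $v$ strictly longer than the longest one, a contradiction; in i) $\Rightarrow$ v), with $q$ the longest palindromic suffix of a complete return $r$ to $p$, the trichotomy $q=p$ (contradicts unioccurrence of $q$, granted by richness of the factor $r$), $|p|<|q|<|r|$ (the prefix copy of $p$ inside the suffix occurrence of $q$ is a forbidden third occurrence of $p$ in $r$), and $|q|=|r|$ (so $r$ is a palindrome) is exhaustive and each case closes. Two steps deserve to be made explicit in a final write-up, though neither is a substantive gap: first, the bridge between ``$v$ has \emph{a} unioccurrent palindromic suffix'' (the wording of iv)) and ``the \emph{longest} palindromic suffix of $v$ is unioccurrent'' (what the counting argument delivers), which follows from your own observation that a palindromic suffix of a palindrome is also a prefix of it, so two occurrences of the longest one would duplicate every shorter one; second, that heredity of condition iv) by itself only passes to prefixes, so the reversal step (a factor is a prefix of a suffix, and a suffix is the reversal of a prefix of the reversal) is genuinely needed to get richness of all factors, which your i) $\Rightarrow$ v) argument then relies on when it invokes richness of $r$.
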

\begin{remark}  \label{R:1} The equivalences:  i) $\Leftrightarrow$ ii), i) $\Leftrightarrow$ iii), and i) $\Leftrightarrow$ iv) were proved in \cite{xDjJgP01epis}. 
\end{remark}

Explicit characterizations of periodic rich infinite words and recurrent {\em balanced} rich infinite words have also been established in \cite{aGjJsWlZ08pali}. More recently, 
we proved the following connection between palindromic richness and {complexity}. 

\begin{proposition} \label{P:mBaDaGlZ08acon-1} {\em \cite{mBaDaGlZ08acon}}
For any infinite word $\bw$ whose set of factors is closed under reversal, the following conditions are  equivalent:
\begin{itemize}
\item all complete returns to palindromes are palindromes;
\item $\cP(n) + \cP(n+1) = \cC(n+1) - \cC(n) + 2$ for all $n \in \NN$,
\end{itemize}
where $\cP$ (resp.~$\cC$) denotes the {\em palindromic complexity} (resp.~{\em factor complexity})  function of $\bw$, which counts the number of distinct palindromic factors (resp.~factors) of each length in $\bw$.
\end{proposition}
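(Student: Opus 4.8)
The plan is to prove the equivalence by isolating a universal inequality and then pinning down its equality case. First I would record that the hypothesis is self-improving: closure of $\fac(\bw)$ under reversal forces $\bw$ to be recurrent (for any factor $v$, reversing longer and longer prefixes of $\bw$ exhibits $\rev v$ arbitrarily far out, so $\rev v$ recurs; applying this to $\rev v$ gives recurrence of $v$). Hence each Rauzy graph $\Gamma_n$ — vertices the factors of length $n$, one edge per factor of length $n+1$ joining its length-$n$ prefix to its length-$n$ suffix — is connected, and $\cC(n),\cC(n+1)$ are its numbers of vertices and edges. The goal then splits as: show $\cP(n)+\cP(n+1)\le \cC(n+1)-\cC(n)+2$ for every $n$, and show that equality at all $n$ is equivalent to the complete-return condition.

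For the inequality the device is the reversal involution $R\colon u\mapsto\rev u$, a graph automorphism of $\Gamma_n$ reversing edge orientations. Its fixed vertices are exactly the palindromes of length $n$ and its fixed edges exactly the palindromes of length $n+1$; since $R$ flips each fixed edge, the only fixed point on it is its midpoint, so the fixed-point set of $R$ on the realization $X=|\Gamma_n|$ consists of $\cP(n)+\cP(n+1)$ isolated points. Feeding $\chi(X)=\cC(n)-\cC(n+1)$ and $\chi(X^{R})=\cP(n)+\cP(n+1)$ into the standard Euler-characteristic relation for an involution, $\chi(X)+\chi(X^{R})=2\chi(X/R)$, gives the key identity
$$\cC(n+1)-\cC(n)+2-\cP(n)-\cP(n+1)=2\bigl(1-\chi(|\Gamma_n|/R)\bigr).$$
Because $\bw$ is recurrent, $|\Gamma_n|/R$ is connected and at most one-dimensional, whence $\chi(|\Gamma_n|/R)\le 1$; this is exactly the asserted inequality, with equality at level $n$ precisely when $|\Gamma_n|/R$ is contractible, i.e. a tree.

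It remains — and here is the real work — to match the tree condition at all levels with the statement that every complete return to a palindrome is a palindrome. A nontrivial loop in $|\Gamma_n|/R$ lifts either to a genuine cycle of $\Gamma_n$ or to a path joining some $v$ to $\rev v$; the former are controlled by complete returns to the length-$n$ factors, while the latter are exactly the walks spelled by palindromic factors longer than $n$. For the forward direction I would take a complete return $r$ to a palindrome $p$ with $r\neq\rev r$ and, at level $n=|p|$, consider the closed walk through the fixed vertex $p$ that $r$ determines together with its mirror $\rev r$, and verify that its class in $H_1(|\Gamma_n|/R)$ is nonzero, contradicting the tree property. Conversely, granting that all such complete returns are palindromes, one argues that every cycle of $\Gamma_n$ and every $v\to\rev v$ walk can be symmetrized and then collapsed in the quotient, forcing $b_1(|\Gamma_n|/R)=0$. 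I expect the delicate point to be precisely this translation: the tree condition is stated at each fixed scale $n$ and involves returns to all length-$n$ factors, whereas the hypothesis concerns palindromic factors of every length, so the bookkeeping that converts one into the other — cleanest through return words, exploiting that a complete return to $p$ is a palindrome exactly when its two copies of $p$ sit symmetrically — is where the argument must be made airtight.
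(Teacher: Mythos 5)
First, note that the paper you were given never proves Proposition~\ref{P:mBaDaGlZ08acon-1}: it is quoted from \cite{mBaDaGlZ08acon}, whose argument (as its title indicates) runs through return-word combinatorics and the characterizations of richness, not through equivariant topology of Rauzy graphs. So your proposal must stand on its own, and on its own it has a genuine gap. The part you actually carry out --- recurrence from reversal-closure, the involution $R$ on $X=|\Gamma_n|$, the relation $\chi(X)+\chi(X^R)=2\chi(X/R)$, hence $\mathcal{P}(n)+\mathcal{P}(n+1)\le \mathcal{C}(n+1)-\mathcal{C}(n)+2$ with equality at level $n$ exactly when $X/R$ is a tree --- is correct, but it is only the previously known inequality of Bal\'a\v{z}i, Mas\'akov\'a and Pelantov\'a in topological dress. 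The entire content of the proposition is the equivalence of ``$|\Gamma_n|/R$ is a tree for every $n$'' with ``every complete return to a palindrome is a palindrome,'' and there your proposal offers only a sketch whose central step is false as stated.

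Concretely, your forward direction localizes at the level $n=|p|$: given a non-palindromic complete return $r$ to a palindrome $p$, you propose to show that the closed walk of $r$ through the fixed vertex $p$ is nontrivial in $H_1(|\Gamma_{|p|}|/R)$. The Thue--Morse word $0110100110010110\cdots$ refutes this. Its factor $r=11010011$ is a complete return to the palindrome $p=11$ (exactly two occurrences of $11$, as prefix and suffix) and $r$ is not a palindrome; yet $\mathcal{P}(2)+\mathcal{P}(3)=2+2=4=\mathcal{C}(3)-\mathcal{C}(2)+2$, so by your own equality criterion $|\Gamma_2|/R$ \emph{is} a tree, and every closed walk in it --- in particular the projection of the walk of $r$ --- is null-homotopic. (One can see the collapse directly: the fixed edges $101$ and $010$ project to half-edges traversed out-and-back, and the remaining edges cancel in pairs.) For Thue--Morse the equality first fails at $n=3$, a level bearing no evident relation to $|p|=2$. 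So a non-palindromic complete return to $p$ need not be detected at level $|p|$; the correspondence between offending palindromes and levels where equality fails is global across $n$, not local, and that cross-level bookkeeping --- which you yourself flag as ``where the argument must be made airtight'' --- is precisely the proof that is missing. The converse direction (``symmetrize and then collapse'') has the same status: a plausible program, not an argument. As it stands, your proposal establishes the inequality but not the stated equivalence.
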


From the perspective of richness, the above proposition can be viewed as a characterization of {\em recurrent} rich infinite words since any rich infinite word is recurrent if and only if its set of factors is closed under reversal (see~\cite{aGjJsWlZ08pali}). Interestingly, the proof of Proposition~\ref{P:mBaDaGlZ08acon-1} relied upon another characterization of rich words, stated below. 

\begin{proposition} \label{P:mBaDaGlZ08acon-2} {\em \cite{mBaDaGlZ08acon}}
 A finite or infinite word $w$ is rich if and only if, for each factor $v$ of $w$, every factor of $w$ beginning with $v$ and ending with $\tilde v$ and containing no other occurrences of $v$ or $\tilde v$ is a palindrome.
\end{proposition}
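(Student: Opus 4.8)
My plan is to prove the two implications separately, treating the \emph{if} direction as an easy specialization and concentrating the real work on the \emph{only if} direction. For the \emph{if} direction I would let $v$ range only over the palindromic factors $p$ of $w$. When $v=p$ is a palindrome we have $\tilde v=p$, so a factor beginning with $v$, ending with $\tilde v$, and containing no other occurrence of $v$ or $\tilde v$ is precisely a factor in which $p$ occurs exactly twice, once as a prefix and once as a suffix, i.e.\ a complete return to $p$. Hence the hypothesis, restricted to palindromic $v$, asserts exactly that every complete return to every palindrome of $w$ is a palindrome, which is condition (v) of Proposition~\ref{P:rich-aGjJ07pali}; therefore $w$ is rich.

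For the \emph{only if} direction, assume $w$ is rich and let $r$ be a factor of $w$ beginning with $v$, ending with $\tilde v$, and containing no other occurrence of $v$ or $\tilde v$; the goal is to show $r=\tilde r$. Since $r$ is a factor of a rich word, $r$ is itself rich, and I would use the standard fact (an immediate consequence of condition (iv) of Proposition~\ref{P:rich-aGjJ07pali} together with the nesting of palindromic suffixes) that the longest palindromic suffix $s$ of a rich word is unioccurrent in it; thus $s$ occurs exactly once in $r$. The strategy is to prove $s=r$, which forces $r$ to be a palindrome, and I split according to the length of $s$ relative to $v$.

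If $|s|>|v|$, then $\tilde v$, being a suffix of $r$ shorter than $s$, is a proper suffix of the palindrome $s$; reversing and using $\tilde s=s$ shows that $v$ is a prefix of $s$, so $v$ occurs in $r$ starting at the position where $s$ begins, namely $|r|-|s|+1$. Were $|s|<|r|$, this position would be at least $2$, yielding an occurrence of $v$ distinct from the prefix occurrence and contradicting the return condition on $r$; hence $|s|=|r|$ and $r=s$ is a palindrome. If instead $|s|\le|v|$, then $s$ is a suffix of $\tilde v$, and one checks that $s$ equals the longest palindromic suffix of $\tilde v$, which coincides with the longest palindromic prefix of $v$ (a palindrome, hence fixed by reversal); consequently $s$ is a prefix of $v$, and so of $r$. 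Since $s$ is unioccurrent in $r$, its prefix occurrence and its suffix occurrence must coincide, forcing $s=r$, again a palindrome.

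The main obstacle, and the crux of the whole argument, is the first case: extracting from ``$\tilde v$ is a proper suffix of a palindromic suffix $s$ of $r$'' a genuine \emph{second} occurrence of $v$ inside $r$, which contradicts the return hypothesis unless $s$ exhausts $r$. The care here lies in the bookkeeping of occurrence positions and in correctly identifying the longest palindromic suffix of $r$ with a palindromic border arising from $v$; once this is handled, the second case follows quickly from unioccurrence, and the two implications together establish the equivalence.
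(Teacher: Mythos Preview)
The paper does not actually prove Proposition~\ref{P:mBaDaGlZ08acon-2}; it is quoted from \cite{mBaDaGlZ08acon} as background for the new results, so there is no in-paper proof to compare against.

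That said, your argument is correct. The \emph{if} direction is exactly the specialization of the hypothesis to palindromic $v$, reducing to condition~(v) of Proposition~\ref{P:rich-aGjJ07pali}. For the \emph{only if} direction, your appeal to the unioccurrent longest palindromic suffix $s$ of the rich word $r$ is the right tool. In Case~1 ($|s|>|v|$), the prefix copy of $v$ inside $s$ sits at position $|r|-|s|+1$; you should note explicitly that this is also distinct from the suffix occurrence of $\tilde v$ (since $|r|-|s|+1<|r|-|v|+1$), so it really is an ``other'' occurrence and forces $s=r$. In Case~2 ($|s|\le|v|$), your identification of $s$ with the longest palindromic suffix of $\tilde v$, hence with the longest palindromic prefix of $v$, is correct; unioccurrence then forces $r=s$, and since $|s|\le|v|\le|r|$ this collapses to $r=v=\tilde v$, a palindrome. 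The outline is complete and the ``main obstacle'' you flag is handled by the positional inequality above.
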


In this note, we establish yet another interesting characteristic property of rich words. Our main results are the following two theorems.

\begin{theorem} \label{T:main}
For any finite or infinite word $w$, the following conditions are equivalent:
\begin{itemize}
\item[(A)] $w$ is rich;
\item[(B)] each non-palindromic factor $u$ of $w$ is uniquely determined by a pair $(p,q)$ of distinct palindromes such that $p$ and $q$ are not factors of each other and $p$ (resp.~$q$) is the longest palindromic prefix (resp.~suffix) of $u$.
\end{itemize}
\end{theorem}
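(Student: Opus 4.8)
\medskip
\noindent\emph{Sketch of the intended proof.} The plan is to treat the two implications separately; the real content is (A)$\Rightarrow$(B), whereas (B)$\Rightarrow$(A) will follow cleanly from the complete-return characterisation. I will repeatedly use one consequence of Proposition~\ref{P:rich-aGjJ07pali}(iii): in a rich word the unioccurrent palindromic suffix (resp.\ prefix) it provides for a factor is forced to be the \emph{longest} palindromic suffix (resp.\ prefix), since a strictly longer palindromic suffix would contain the unioccurrent one as a prefix and thus produce a second occurrence. Hence, for every factor $u$ of a rich word, its longest palindromic prefix $p$ and its longest palindromic suffix $q$ are each unioccurrent in $u$.

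For (A)$\Rightarrow$(B), let $u$ be a non-palindromic factor with longest palindromic prefix $p$ and suffix $q$; then $|p|,|q|<|u|$, and $p$ (resp.\ $q$) occurs in $u$ only as a prefix (resp.\ suffix). The first two clauses of (B) are then immediate: if $p=q$, this palindrome occurs both as the prefix and as the suffix of $u$, two distinct occurrences since $|u|>|p|$; and if one of $p,q$ were a factor of the other, the shorter would have an occurrence interior to $u$ in addition to its extremal one. Either possibility contradicts unioccurrence. It remains to prove uniqueness, so suppose $u'$ is another non-palindromic factor of $w$ with the same $p$ and $q$; by the remark above, $u'$ too has $p$ only as a prefix and $q$ only as a suffix, and I will aim to show $u'=u$.

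I would establish this by contradiction, choosing such a pair $u\neq u'$ of minimal total length. Neither can be a prefix or a suffix of the other, for otherwise $p$ or $q$ would acquire an interior occurrence. Since $p$ and $q$ are palindromes, $u$ and $u'$ share their first letter and their last letter; let $c$ be their longest common prefix and $d$ their longest common suffix, so that $|p|\le|c|$, $|q|\le|d|$, and $c$ is right-special while $d$ is left-special in $w$. The heart of the matter is to show that this branching is incompatible with $p$ and $q$ remaining the \emph{unioccurrent} extremal palindromes of both $u$ and $u'$. This is the step I expect to be the main obstacle, and the one where richness enters decisively: I would track the longest palindromic suffix along the prefixes of $u$ and of $u'$ beyond the branch point $c$, and invoke the complete-return property (Proposition~\ref{P:rich-aGjJ07pali}(v)) to argue that the palindrome produced at the divergence cannot be reconciled with the common terminal palindrome $q$ without creating a repeated occurrence of $p$ or of $q$. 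Equivalently, one must show that the word filling the gap between---or the overlap of---$p$ and $q$ is entirely forced by the pair $(p,q)$; it is precisely this forcing that fails for non-rich words.

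For (B)$\Rightarrow$(A) I would argue the contrapositive, and here the situation is clean. If $w$ is not rich, take a non-rich factor $v$ of minimal length; then $v$ with its last letter removed is rich, so appending that letter adds no new palindromic factor, i.e.\ the longest palindromic suffix $q$ of $v$ already occurs earlier in $v$ and so is \emph{not} unioccurrent. Note that $v$ is non-palindromic, since a palindrome is its own unioccurrent longest palindromic suffix. Take the two rightmost occurrences of $q$ in $v$; their complete return $r$ ends where $v$ ends. If $r$ were a proper factor of $v$ it would be rich, hence a palindrome by Proposition~\ref{P:rich-aGjJ07pali}(v), and thus a palindromic suffix of $v$ strictly longer than $q$---impossible. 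Therefore $r=v$, which forces the earlier occurrence of $q$ to sit at the very beginning: $q$ is a prefix of $v$. Consequently the longest palindromic prefix $p$ of the non-palindromic factor $v$ has the palindrome $q$ as a prefix, so either $p=q$ or $q$ is a proper factor of $p$. In either case $v$ witnesses the failure of (B), completing the contrapositive and the plan.
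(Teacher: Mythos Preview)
Your contrapositive for (B)$\Rightarrow$(A) is complete and correct, and in fact cleaner in some respects than the paper's direct argument (the paper verifies Proposition~\ref{P:rich-aGjJ07pali}(iv) by showing that a hypothetical second occurrence of $q$ produces a shorter factor with the same pair $(p,q)$). Your route via a minimal non-rich factor and the observation that the resulting complete return $r$ to $q$ must equal $v$ is a nice alternative.

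The genuine gap is in the uniqueness half of (A)$\Rightarrow$(B). You set up a minimal counterexample $u\neq u'$, pass to the longest common prefix $c$ and suffix $d$, and then say only that you ``would track the longest palindromic suffix along the prefixes of $u$ and of $u'$ beyond the branch point'' and hope the complete-return property forces a contradiction. That is precisely the place where the work lies, and you have not done it; as stated, nothing prevents the palindromic suffixes on the two branches from evolving independently and both eventually reaching $q$. Minimising total length and comparing common prefixes/suffixes does not give you a handle on how $p$ and $q$ interact inside \emph{one ambient factor of $w$}, which is what the argument actually needs.

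The paper's device is different and worth seeing. Rather than compare $u$ and $u'$ intrinsically, choose a factor $z$ of $w$ of minimal length containing both, say with $u$ a prefix and $u'$ a suffix. Since $p$ is unioccurrent in $u$, the complete return in $z$ to $p$ starting at the left end has length $>|u|$; being a palindrome, it ends with $\tilde u$. Symmetrically, the complete return to $q$ ending at the right end of $z$ begins with $\tilde{u'}$. Thus $\tilde u$ and $\tilde{u'}$ both occur in the \emph{interior} of $z$. Now whichever of these two interior occurrences comes first, one more application of the complete-return property (to $q$ or to $p$) produces an occurrence of $u'$ or $u$ strictly inside $z$, yielding a shorter factor containing both and contradicting minimality of~$z$. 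This is the missing idea: the minimality is on the length of a containing factor, not on $|u|+|u'|$, and the lever is that complete returns to $p$ and $q$ flip $u$, $u'$ to their reversals and back.
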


\smallskip
\begin{theorem} \label{T:main-2}
A finite or infinite word $w$ is rich if and only if each factor of $w$ is uniquely determined by its longest palindromic prefix and its longest palindromic suffix. 
\end{theorem}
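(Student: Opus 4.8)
The plan is to deduce Theorem~\ref{T:main-2} from Theorem~\ref{T:main}, the only genuine work being to account for palindromic factors (which Theorem~\ref{T:main} does not mention) and, for the converse, to manufacture an explicit collision when $w$ is not rich. Throughout, write $p(u)$ and $q(u)$ for the longest palindromic prefix and longest palindromic suffix of a factor $u$; the assertion to be proved is precisely that the map $u\mapsto\bigl(p(u),q(u)\bigr)$ is injective on the factors of $w$ exactly when $w$ is rich. Note that if $u$ is a palindrome then $p(u)=q(u)=u$, while Theorem~\ref{T:main} tells us that a \emph{non}-palindromic factor of a rich word has $p(u)\neq q(u)$; this dichotomy is what lets the two kinds of factors be separated.

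For the direction ``rich $\Rightarrow$ uniqueness'', suppose $w$ is rich and let $u,u'$ be factors with $p(u)=p(u')$ and $q(u)=q(u')$. If $u$ is a palindrome then $p(u)=q(u)=u$, so $p(u')=q(u')$; since Theorem~\ref{T:main} forces $p\neq q$ on every non-palindromic factor, $u'$ must also be a palindrome, whence $u'=p(u')=p(u)=u$. The symmetric argument covers the case where $u'$ is a palindrome. If neither is a palindrome, Theorem~\ref{T:main} states that each is the unique non-palindromic factor with its given pair of longest palindromic prefix and suffix, so $u=u'$ once more. Hence the map is injective and the uniqueness property holds.

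For the converse I argue by contraposition. Assume $w$ is not rich; then some finite factor of $w$ is not rich, so by the Droubay--Justin--Pirillo characterization (Proposition~\ref{P:rich-aGjJ07pali}, $(i)\Leftrightarrow(iv)$), and using that a word has some unioccurrent palindromic suffix exactly when its longest palindromic suffix is unioccurrent, there is a factor $y$ of $w$ whose longest palindromic suffix $s$ occurs at least twice in $y$. Let $r$ be the complete return to $s$ occurring as a suffix of $y$, namely the factor of $y$ stretching from the second-to-last occurrence of $s$ to the last. Then $|r|>|s|$, and $r$ is not a palindrome: a palindromic $r$ would be a palindromic suffix of $y$ strictly longer than $s$, contradicting the maximality of $s$. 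Moreover every palindromic suffix of $r$ is also a palindromic suffix of $y$, hence of length at most $|s|$; as $s$ is a suffix of $r$, this gives $q(r)=s$.

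It remains to show $p(r)=s$, and this occurrence-counting step is where I expect the main difficulty to lie. Certainly $s$ is a palindromic prefix of $r$, so $|p(r)|\geq|s|$; put $t=p(r)$ and suppose $|t|>|s|$. Since $s$ and $t$ are both prefixes of $r$ with $|s|<|t|$, $s$ is a prefix of $t$, and because $t$ is a palindrome $s$ is also a suffix of $t$; this suffix occurrence of $s$ inside $r$ ends at position $|t|$. But $r$, being a complete return to $s$, contains exactly two occurrences of $s$, ending at positions $|s|$ and $|r|$, so $|t|\in\{|s|,|r|\}$. Both are impossible, since $|t|>|s|$ and $|t|=|r|$ would force the palindrome $t$ to coincide with the non-palindrome $r$. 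Therefore $p(r)=s=q(r)$, and because $s$ is a palindrome we have $p(s)=q(s)=s$ as well. Thus $r$ and $s$ are distinct factors of $w$ (as $|r|>|s|$) with the same longest palindromic prefix and the same longest palindromic suffix, exhibiting the failure of uniqueness. The whole proof therefore rests on Theorem~\ref{T:main} for the forward implication and on the elementary but slightly delicate control of the occurrences of $s$ within the complete return $r$ for the converse.
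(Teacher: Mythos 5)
Your proof is correct, and in both directions it ultimately rests on the same ideas as the paper's, though the converse is packaged differently. For the forward direction, the paper simply cites Lemma~\ref{L:2}; your bridging argument for palindromic factors (a palindrome has equal longest palindromic prefix and suffix, while statement (B) of Theorem~\ref{T:main} forces these to be \emph{distinct} for non-palindromic factors) is exactly what is needed to pass from (B), which only speaks of non-palindromic factors, to the statement about all factors, so this direction is sound. For the converse, the shared combinatorial core is the fact that a non-palindromic complete return $r$ to a palindrome $s$ has $s$ as both its longest palindromic prefix and its longest palindromic suffix, so that $r$ and $s$ collide. The paper argues directly: assuming uniqueness, it shows every complete return to a palindrome is a palindrome and invokes characterization (v) of Proposition~\ref{P:rich-aGjJ07pali}; to do so it must first dispose of the case where the two occurrences of the palindrome overlap (overlap forces $r$ to be a palindrome). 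You argue by contraposition through characterization (iv): from non-richness you extract a factor $y$ whose longest palindromic suffix $s$ is not unioccurrent, take the suffix complete return $r$ to $s$, check that $r$ is non-palindromic (by maximality of $s$ among palindromic suffixes of $y$), and establish $p(r)=q(r)=s$ by your ending-position count. Your route needs two small extra steps that the paper's does not: the fact that a word has a unioccurrent palindromic suffix if and only if its \emph{longest} palindromic suffix is unioccurrent (you assert this without proof; it holds because any shorter palindromic suffix recurs as a prefix of the longest one), and the non-palindromicity of $r$. In exchange, your ending-position argument handles overlapping occurrences of $s$ automatically, so you never need the paper's separate overlap analysis. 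The two proofs are of comparable length and difficulty; yours has the minor aesthetic advantage of producing an explicit witness to the failure of uniqueness.
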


\section{Terminology and notation} \label{S:preliminaries}

Given a finite word $w = x_1x_2\cdots x_m$ (where each $x_i$ is a letter), the  \emph{length} of $w$, denoted by $|w|$, is equal to $m$.  We denote by $\tilde w$ the {\em reversal} of $w$, given by $\tilde w = x_m \cdots x_2x_1$ (the ``mirror image'' of $w$). If $w = \tilde w$, then $w$ is called a {\em palindrome}. By convention, the {\em empty word} $\empt$ is assumed to be a palindrome.

 A finite word $z$ is a {\em factor} of a finite or infinite word $w$ if $w = uzv$ for some words $u$, $v$. In the special case $u = \empt$ (resp.~$v = \empt$), we call $z$ a {\em prefix} (resp.~{\em suffix}) of $w$. If $u \ne \empt$ and $v \ne \empt$, then we say that $z$ is an {\em interior factor} of $w =uzv$. A {\em proper factor} (resp.~{\em proper prefix}, {\em proper suffix}) of a word $w$ is a factor (resp.~prefix, suffix) of $w$ that is shorter than $w$.

\section{Proof of Theorem~\ref{T:main}} \label{S:proof}

The following two lemmas establish that (A) implies (B).

\begin{lemma} \label{L:1} Suppose $w$ is a finite or infinite rich word and let $u$ be any non-palindromic factor of $w$ with longest palindromic prefix $p$ and longest palindromic suffix $q$. Then 
$p \ne q$, and $p$ and $q$ are not factors of each other. 
\end{lemma}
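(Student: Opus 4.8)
The plan is to exploit the fact that, in a rich word, the longest palindromic prefix and the longest palindromic suffix are each \emph{unioccurrent}, and then to derive all three assertions by a direct occurrence-counting argument. First I would record two basic observations. Since $u$ is a factor of the rich word $w$, $u$ is itself rich; applying condition (iii) of Proposition~\ref{P:rich-aGjJ07pali} to $u$, together with the fact (justified at the end) that the longest palindromic suffix, resp.~prefix, of a rich word is unioccurrent in it, I obtain that both $q$ and $p$ are unioccurrent in $u$. Second, because $u$ is non-palindromic while $p$ and $q$ are palindromes, we have $p \ne u \ne q$, whence $|p| < |u|$ and $|q| < |u|$.

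For the inequality $p \ne q$, I would suppose $p = q$. Then this common palindrome occurs in $u$ both as a prefix, ending at position $|p|$, and as a suffix, ending at position $|u|$. Since $|p| < |u|$, these two occurrences are distinct, contradicting the unioccurrence of $p$ in $u$; hence $p \ne q$.

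For the claim that $p$ and $q$ are not factors of each other, I would treat the two symmetric cases. Suppose first that $q$ is a factor of $p$. As $p$ is a prefix of $u$, any occurrence of $q$ inside $p$ ends at a position at most $|p| < |u|$, whereas $q$ occurs as a suffix of $u$, ending at position $|u|$; these occurrences are distinct, contradicting the unioccurrence of $q$. Suppose instead that $p$ is a factor of $q$. As $q$ is a suffix of $u$ occupying the last $|q| < |u|$ positions, any occurrence of $p$ inside $q$ begins at a position at least $|u|-|q|+1 \ge 2$, whereas $p$ occurs as a prefix of $u$, beginning at position $1$; again the two occurrences are distinct, now contradicting the unioccurrence of $p$. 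In either case we reach a contradiction, so neither of $p$, $q$ is a factor of the other.

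The only delicate point is the preliminary reduction that the unioccurrent palindromic suffix guaranteed by Proposition~\ref{P:rich-aGjJ07pali}(iii) is in fact the longest one. I would justify this by noting that every palindromic suffix $s$ of $u$ is also a prefix of the longest palindromic suffix $q$, since $s = \tilde s$ is a suffix of $q = \tilde q$; consequently any repeated occurrence of $q$ forces a repeated occurrence of $s$, so that if \emph{some} palindromic suffix of $u$ is unioccurrent then the longest one $q$ must be unioccurrent as well, and symmetrically for prefixes. Once this is in place, the remaining occurrence-counting is routine, the strict inequalities $|p|,|q| < |u|$ supplying exactly the positional slack needed to keep the two occurrences apart.
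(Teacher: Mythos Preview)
Your proof is correct and follows essentially the same approach as the paper's: both derive the conclusions from the unioccurrence of $p$ and $q$ in $u$ (via Proposition~\ref{P:rich-aGjJ07pali}) together with the strict inequalities $|p|,|q|<|u|$. The paper compresses the argument into two sentences and tacitly uses that the unioccurrent palindromic suffix (resp.~prefix) guaranteed by Proposition~\ref{P:rich-aGjJ07pali} must be the longest one, whereas you spell out both this reduction and the occurrence-counting explicitly.
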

\begin{proof} By Proposition~\ref{P:rich-aGjJ07pali}, $p$ and $q$ are unioccurrent factors of $u$. Thus, since $u$ is not a palindrome (and hence $|u| > \max\{|p|,|q|\}$), it follows immediately that $p \ne q$, and $p$ and $q$ are not factors of each other. 
\end{proof}

\begin{lemma} \label{L:2}
Suppose $w$ is a finite or infinite rich word. If $u$ and $v$ are factors of $w$ with the same longest palindromic prefix $p$ and the same longest palindromic suffix $q$, then  $u = v$.
\end{lemma}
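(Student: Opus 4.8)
The plan is to argue by contradiction, taking a counterexample $(u,v)$ with $|u|+|v|$ minimal and, without loss of generality, $|u|\le |v|$. The first ingredient I would record is that, in a rich word, the longest palindromic prefix and the longest palindromic suffix of \emph{any} factor are \emph{unioccurrent} in that factor: this follows from condition~(iii) of Proposition~\ref{P:rich-aGjJ07pali} together with condition~(v), since a second occurrence of the longest palindromic suffix would produce, via a complete return, a strictly longer palindromic suffix. With this in hand the degenerate case $p=q$ is disposed of at once: then $p$ occurs both as a prefix (its unique occurrence as longest palindromic prefix) and as a suffix of $u$, which forces $u=p$, and likewise $v=p$, so $u=v$.

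Assuming now $p\ne q$, Lemma~\ref{L:1} applies, so $u$ and $v$ are non-palindromic and $p,q$ are not factors of each other. I would next rule out that $u$ is a proper prefix of $v$: otherwise the suffix occurrence of the longest palindromic suffix $q$ of $u$ would sit in the interior of $v$, contradicting that $q$ is unioccurrent in $v$. Symmetrically, using that the longest palindromic prefix $p$ of $v$ is unioccurrent in $v$, the word $u$ cannot be a proper suffix of $v$. Hence $u$ and $v$ must already differ at some position strictly inside $u$; in particular they share their first letter (the first letter of $p$) and, read from the right, their last letter (the last letter of $q$).

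For the core case I would strip the common last letter, writing $u=u'a$ and $v=v'a$. Since $|p|<|u|$, the palindrome $p$ survives as a prefix of both $u'$ and $v'$ and remains their longest palindromic prefix; moreover $u'\ne v'$, because $u$ and $v$ diverge before their last letter. By minimality of the counterexample, $u'$ and $v'$ cannot then share their longest palindromic suffix, so $\mathrm{lps}(u')\ne\mathrm{lps}(v')$. On the other hand, $q=\mathrm{lps}(u'a)=\mathrm{lps}(v'a)$, and the standard one-letter extension rule for longest palindromic suffixes writes $q=a\,t\,a$, where $t$ is the \emph{longest} palindromic suffix of $u'$ (and, simultaneously, of $v'$) that is preceded by the letter $a$. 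The contradiction I am aiming for is that $r=\mathrm{lps}(u')$ and $r'=\mathrm{lps}(v')$ are in fact forced to coincide, after which the induction hypothesis gives $u'=v'$ and hence $u=u'a=v'a=v$. This is the heart of the matter and the step I expect to be genuinely delicate: each of $r,r'$ contains $t$ as both a prefix and a suffix and is unioccurrent in its own word, and one must combine this unioccurrence with the shared prefix $p$ and the shared suffix $a\,t$ of $u'$ and $v'$ to show that the way $t$ is ``wrapped'' into $r$ and into $r'$ cannot differ. I would first clear the easy branches (for instance, when one of $r,r'$ already equals $t$, or when one of them extends all the way to the start of its word, where unioccurrence quickly collapses that word to the palindrome $p$ and, with $|u|\le |v|$, forces $u'=v'$) and isolate the remaining genuinely two-sided configuration, in which the global richness of $w$—via the complete-return property~(v) applied to the two distinct occurrences of $q$—is what ultimately forbids the divergence. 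I stress that this last invocation of the ambient richness is essential and not a formality: without it the statement would be false, as the two rich words $abac$ and $abadc$ share both their longest palindromic prefix $aba$ and their longest palindromic suffix $c$, and it is precisely their impossibility of coexisting as factors of a single rich word that the argument must exploit.
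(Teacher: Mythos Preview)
Your proposal leaves a genuine gap precisely where you yourself flag it: the claim $\mathrm{lps}(u')=\mathrm{lps}(v')$ is stated as the target but never established. You observe that $r=\mathrm{lps}(u')$ and $r'=\mathrm{lps}(v')$ both contain $t$ as prefix and suffix and are each unioccurrent, and you gesture at an application of the complete-return property to ``the two distinct occurrences of $q$''. But to invoke a complete return you must first place those two occurrences inside a common factor of $w$, and your letter-stripping induction on $|u|+|v|$ gives you no control whatsoever over where $u$ and $v$ sit in $w$ relative to each other. Nothing in the purely local data $(p,q,a,t,r,r')$ forces $r=r'$; your own closing example with $abac$ and $abadc$ confirms that the obstruction is global, and your framework has discarded exactly the global information needed to exploit it. As written, the ``core case'' is not a sketch but a statement of what remains to be proved.

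The paper avoids induction on $|u|+|v|$ and instead minimizes over something that retains the ambient structure: it takes a shortest factor $z$ of $w$ containing both $u$ and $v$, say beginning with $u$ and ending with $v$. Since $p$ is unioccurrent in $u$ and $q$ is unioccurrent in $v$, the complete return to $p$ beginning at the left end of $z$ is strictly longer than $u$, and the complete return to $q$ ending at the right end of $z$ is strictly longer than $v$; being palindromes by richness, they force $\tilde u$ and $\tilde v$ to occur in the interior of $z$. One further complete return (to $q$ or to $p$, according to the relative order of these interior occurrences) then yields a proper factor of $z$ still beginning with $u$ and ending with $v$, contradicting the minimality of $z$. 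The idea your approach is missing is exactly this choice of what to minimize: a factor of $w$ containing both $u$ and $v$, rather than the pair $(u,v)$ itself.
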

\begin{proof}
We first observe that if $u$ or $v$ is a palindrome, then $u = p = q = v$. So let us now assume that neither $u$ nor $v$ is a palindrome. 

Suppose to the contrary that $u \ne v$. Then $u$ and $v$ are clearly not factors of each other since neither $u$ nor $v$ is equal to $p$ or $q$, and $p$ and $q$ are unioccurrent in each of $u$ and $v$ (by Proposition~\ref{P:rich-aGjJ07pali}). Let $z$ be a factor of $w$ of minimal length containing both $u$ and $v$. As $u$ and $v$ are not factors of each other, we may assume without loss of generality that $z$ begins with $u$ and ends with $v$. 
Then $z$ contains {\em at least} two distinct occurrences of $p$ (as a prefix of each of $u$ and $v$). In particular, $z$ begins with a complete return $r_1$ to $p$ with $|r_1| > |u|$ because $p$ is unioccurrent in $u$ by Proposition~\ref{P:rich-aGjJ07pali}. Moreover, $r_1$ is a palindrome by the richness of $w$, 
and hence $r_1$ ends with $\tilde{u}$ since $u$ is a proper prefix of $r_1$. 
Similarly, $z$ ends with a complete return $r_2$ to $q$ with $|r_2| > |v|$ since $q$ is unioccurrent in $v$ by Proposition~\ref{P:rich-aGjJ07pali}. Hence, since $r_2$ is a palindrome (by the richness of $w$) and $v$ is a proper prefix of $r_2$, it follows that $r_2$ begins with $\tilde v$. So we have shown that $\tilde u$ and $\tilde v$ are (distinct) interior factors of $z$. 

Let us first suppose that an occurrence of $\tilde v$ is followed by an occurrence of $\tilde u$ in $z$ (i.e., $z$ has an interior factor beginning with $\tilde v$ and ending with $\tilde u$). 
Then, since $q$ is a unioccurrent prefix of each of the (distinct) factors $\tilde v$ and $\tilde u$, we deduce that $z$ contains (as an interior factor) a complete return $r_3$ to $q$ beginning with $\tilde v$. 
In particular, as $r_3$ is a palindrome (by richness), 
$r_3$ ends with $v$.  
Thus, $z$ has a proper prefix beginning with $u$ and ending with $v$, contradicting the minimality of~$z$.  
On the other hand, if $z$ has an interior factor beginning with $\tilde u$ and ending with $\tilde v$, then using the same reasoning as above, we deduce that $z$ has a proper suffix beginning with $u$ and ending with $v$. But again, this contradicts the minimality of~$z$; whence $u = v$.
\end{proof}

\newpage
The proof of ``(A) $\Rightarrow$ (B)'' is now complete. The next lemma proves that (B) implies (A).

\begin{lemma} \label{L:3} Suppose $w$ is a finite or infinite word with the property that each non-palindromic factor $u$ of $w$ is uniquely determined by a pair  $(p,q)$ of distinct palindromes such that $p$ and $q$ are not factors of each other and $p$ (resp.~$q$) is the longest palindromic prefix (resp.~suffix) of $u$. Then $w$ is rich.
\end{lemma}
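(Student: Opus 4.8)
The plan is to prove the contrapositive: assuming $w$ is not rich, I would produce two distinct factors sharing the same longest palindromic prefix and longest palindromic suffix, thereby violating the uniqueness hypothesis (B). The natural tool is characterization (iv) of Proposition~\ref{P:rich-aGjJ07pali}: $w$ fails to be rich precisely when some prefix of $w$ (equivalently, some factor, via (iii)) has \emph{no} unioccurrent palindromic suffix. So first I would fix a shortest factor $u$ that witnesses the failure of richness — more carefully, I would take a shortest non-rich factor, so that every proper factor of $u$ is rich. By Proposition~\ref{P:rich-aGjJ07pali}(ii), the number of distinct palindromes in $u$ is strictly less than $|u|+1$, while every proper prefix of $u$ is rich; this forces the ``new'' letter added to pass from the longest proper prefix to $u$ to contribute \emph{no} new palindrome, i.e. the longest palindromic suffix $s$ of $u$ must already occur earlier in $u$.

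Next I would exploit this repeated longest palindromic suffix $s$ to manufacture a collision. Write $u = u's$ where the displayed suffix occurrence of $s$ is not its first occurrence, and let the earlier occurrence end at some position strictly inside $u$. The idea is to compare $u$ with a shorter factor $v$ obtained by truncating $u$ at that earlier occurrence of $s$ — both $u$ and the truncation end in $s$, and if $s$ is also their longest palindromic suffix, they share that datum. The matching step is to arrange that $u$ and $v$ also share the same longest palindromic \emph{prefix} $p$. Since $v$ is a proper prefix of $u$, any palindromic prefix of $u$ is a palindromic prefix of $v$ and vice versa up to the length of $v$; I would argue that the longest palindromic prefix $p$ of $u$ has length at most $|v|$, so it is the longest palindromic prefix of $v$ as well. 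This is where minimality of $u$ and the richness of its proper factors should be leveraged to control $p$.

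The main obstacle, and the step I expect to require the most care, is guaranteeing that the two factors $u$ and $v$ I construct are genuinely \emph{distinct} while simultaneously sharing \emph{both} the longest palindromic prefix and the longest palindromic suffix — and, in the non-palindromic case demanded by hypothesis (B), that neither is a palindrome and that the associated pair $(p,q)$ is admissible (distinct palindromes, neither a factor of the other). The delicate point is that truncating could accidentally change the longest palindromic prefix or could produce a palindrome; I would need to choose the truncation so that $s$ remains the longest palindromic suffix of the shorter factor and the prefix datum is unaffected. I anticipate handling the possibility that the witnessing factor is itself a palindrome as a separate, easier sub-case (where $u = p = q$ collapses the pair), reducing the real work to the non-palindromic regime where hypothesis (B) applies with full force.
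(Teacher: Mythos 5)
Your first step is sound: a shortest non-rich factor $u$ of $w$ has all proper factors rich, and Proposition~\ref{P:rich-aGjJ07pali}(iv) together with minimality shows that the longest palindromic suffix $q$ of $u$ is not unioccurrent in $u$. (Your planned palindromic sub-case is in fact vacuous: a palindrome is its own longest palindromic suffix, trivially unioccurrent, so $u$ cannot be a palindrome; note that had this case been possible, hypothesis (B) --- which is silent about palindromic factors --- would have given you no contradiction there.) The genuine gap is in the second half: the collision you try to manufacture by truncation provably cannot exist, so the step you flag as ``requiring the most care'' is not a matter of care. Here is why. Let $r$ be the suffix of $u$ running from the second-to-last occurrence of $q$ to the end of $u$, i.e.\ the complete return to $q$ ending at position $|u|$. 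Every palindromic suffix of $r$ is a palindromic suffix of $u$, so $q$ is also the longest palindromic suffix of $r$; and $q$ occurs twice in $r$, so $r$ is non-rich (a rich word must have a unioccurrent palindromic suffix, and only the longest one can be unioccurrent, since any shorter palindromic suffix is both a prefix and a suffix of it). By minimality of $u$ we get $r = u$. Hence $q$ occurs in $u$ exactly twice, as a prefix and as a suffix. Consequently ``truncating $u$ at the earlier occurrence of $q$'' always yields $v = q$ itself, a palindrome. The pair $u \ne v$ does share the same longest palindromic prefix and suffix, and this collision would serve to prove Theorem~\ref{T:main-2}; but hypothesis (B) of Lemma~\ref{L:3} constrains only \emph{non-palindromic} factors, so it says nothing about $v$, and you obtain no contradiction. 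There is no other truncation to choose, since $q$ has no other occurrences.

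The missing idea can be supplied in two different ways. The paper's proof abandons minimality: it takes an arbitrary prefix $u$ whose longest palindromic suffix $q$ fails to be unioccurrent, invokes the admissibility clause of (B) (that $p$ and $q$ are not factors of each other), and builds a proper factor $v$ of $u$ that begins with $p$, ends with $q$, and has no interior occurrence of $p$ or $q$; the absence of interior occurrences is exactly what forces $p$ and $q$ to be the longest palindromic prefix and suffix of $v$ (a longer palindromic prefix of $v$ would exhibit $p$ as an interior factor, being a suffix of that longer palindromic prefix, and symmetrically for $q$), producing two distinct non-palindromic factors with the same admissible pair. Alternatively --- and this is the shortest repair of your own argument --- the degenerate computation above already yields a contradiction of a different kind: since $u$ begins with $q$ and $u \ne q$, the longest palindromic prefix $p$ of $u$ has $q$ as a prefix, so $p$ and $q$ are either equal or factors of one another; thus the non-palindromic factor $u$ admits \emph{no} pair of distinct, mutually non-factor palindromes as its longest palindromic prefix and suffix, contradicting (B) outright, with no collision needed. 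Either route completes the argument, but your proposal as written contains neither, so it is not yet a proof.
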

\begin{proof}
To prove that $w$ is rich, it suffices to show that each prefix of $w$ has a unioccurrent palindromic suffix (see Proposition~\ref{P:rich-aGjJ07pali}). 

Let $u$ be any prefix of $w$ and let $q$ be the longest palindromic suffix of $u$. We first observe that if $u$ is a palindrome then $u = q$, and hence $q$ is unioccurrent in $u$. Now let us suppose that $u$ is not a palindrome and let $p$ be the longest palindromic prefix of $u$. If $q$ is not unioccurrent in $u$, then, as $p$ and $q$ are not factors of each other (by the given property of $w$), we deduce that $u$ has a {\em proper} factor $v$ beginning with $p$ and ending with $q$ and not containing $p$ or $q$ as an interior factor. 
Moreover, we observe that $p$ is the longest palindromic prefix of $v$; otherwise $p$ would occur in the interior of $v$ (as a suffix of a longer palindromic prefix of $v$).  Similarly, we deduce that $q$ is the longest palindromic suffix of $v$. So $v$ has the same longest palindromic prefix and the same longest palindromic suffix as $u$, a contradiction. Whence $q$ is unioccurrent in $u$. This completes the proof of the lemma.
\end{proof}

\medskip
\begin{note}
Likewise, in the case when $w$ is finite, one can easily show that each suffix of $w$ has a unioccurrent palindromic prefix; whence $w$ is rich by Proposition~\ref{P:rich-aGjJ07pali}.
\end{note}

\section{Proof of Theorem~\ref{T:main-2}}

Lemma~\ref{L:2} proves that each factor of a rich word is uniquely determined by its longest palindromic prefix and its longest palindromic suffix. 

Conversely, suppose $w$ is a finite or infinite word with the property that each factor of $w$ is uniquely determined by its longest palindromic prefix and its longest palindromic suffix. To prove that $w$ is rich, we could use very similar reasoning as in the proof of Lemma~\ref{L:3}. But for the sake of interest, we give a slightly different proof. Specifically, we show that all complete returns to any palindromic factor of $w$ are palindromes; whence $w$ is rich by Proposition~\ref{P:rich-aGjJ07pali}.

Let $p$ be any palindromic factor of $w$ and let us suppose to the contrary that $w$ contains a non-palindromic complete return $r$ to $p$. Then $r \ne pp$ and the two occurrences of $p$ in $r$ cannot overlap. Otherwise $r = pz^{-1}p$ for some word $z$ such that $p = zf = gz = \tilde z \tilde g = \tilde p$; whence $z = \tilde z$ and $r = g\tilde z \tilde g = g z \tilde g$, a palindrome. So $r = pvp$ for some non-palindromic word $v$. We easily see that $p$ is both the longest palindromic prefix and the longest palindromic suffix of $r$; otherwise $p$ would occur in the interior of $r$ as a suffix of a longer palindromic prefix of $r$, or as a prefix of a longer palindromic suffix of $r$. As $r \ne p$, we have reached a contradiction to the fact that $p$ is the {\em only} factor of $w$ having itself as both its longest palindromic prefix and its longest palindromic suffix.  Thus, all complete returns to $p$ in $w$ are palindromes. 
This completes the proof of Theorem~\ref{T:main-2}. \qed

\bigskip\bigskip

\end{document}